\numberwithin{equation}{section}
\newtheorem{theorem}{Theorem}[section]
\newtheorem{lemma}[theorem]{Lemma}
\newtheorem{proposition}[theorem]{Proposition}
\theoremstyle{definition}
\newtheorem{remark}[theorem]{Remark}
\newcommand{\Stab}{\operatorname{Stab}}
\newcommand{\Aut}{\operatorname{Aut}}
\newcommand{\PGL}{\operatorname{PGL}}
\newcommand{\GL}{\operatorname{GL}}
\newcommand{\Sym}{\operatorname{S}}
\newcommand{\Alt}{\operatorname{A}}
\newcommand{\ed}{\operatorname{ed}}
\newcommand{\Char}{\operatorname{char}}
\newcommand{\Tr}{\operatorname{Tr}}
\begin{document}

\author{Oakley Edens}   
\thanks{Oakley Edens was partially supported by an Undergraduate Student Research Award (USRA) from 
the National Sciences and Engineering Research Council of Canada.} 

\author{Zinovy Reichstein}
\address{Department of Mathematics\\
	University of British Columbia\\
	Vancouver, BC V6T 1Z2\\Canada}
\thanks{Zinovy Reichstein was partially supported by an Individual Discovery Grant from the
	National Sciences and Engineering Research Council of
	Canada.}

\subjclass[2020]{12E05, 14G17, 14L30, 14E05}

\keywords{Essential dimension, symmetric groups, positive characteristic}
\title{Essential dimension of symmetric groups in prime characteristic}

\maketitle
\begin{abstract} The essential dimension $\ed_k(\Sym_n)$ of the symmetric group $\Sym_n$ is the minimal integer $d$ such that 
the general polynomial $x^n + a_1 x^{n-1} + \ldots + a_n$ can be reduced to
a $d$-parameter form by a Tschirnhaus transformation. Finding this number is a long-standing open problem, 
originating in the work of Felix Klein, long before essential dimension of a finite group was formally defined.  
We now know that $\ed_k(\Sym_n)$ lies between $\lfloor n/2 \rfloor$ and $n-3$
for every $n \geqslant 5$ and every field $k$ of characteristic different from $2$. Moreover, if $\Char(k) = 0$, then
$\ed_k(\Sym_n) \geqslant \lfloor (n+1)/2 \rfloor$ for any $n \geqslant 6$. 
The value of $\ed_k(\Sym_n)$ is not known for any $n \geqslant 8$ and any field $k$, though 
it is widely believed that $\ed_k(\Sym_n)$ should be $n-3$ for every $n \geqslant 5$, 
at least in characteristic $0$. In this paper we show that for every odd prime $p$
there are infinitely many positive integers $n$ such that $\ed_{\mathbb F_p}(\Sym_n) \leqslant n-4$.
\end{abstract}

\section{Introduction}

The essential dimension $\ed_k(\Sym_n)$ of the symmetric group $\Sym_n$ is the smallest integer $d$ such that 
the general polynomial $x^n + a_1 x^{n-1} + \ldots + a_n$ can be reduced to
a $d$-parameter form by a Tschirnhaus transformation. The geometric definition of essential dimension and some background material 
can be found in Section~\ref{sect.prelim}; for a comprehensive overview, see~\cite{merkurjev-survey, icm}.

Finding $\ed_k(\Sym_n)$ is a long-standing open problem, which goes back to F.~Klein~\cite{klein};
cf.~also N.~Chebotarev~\cite{chebotarev}.
Essential dimension of a finite group was formally defined by J. Buhler and the second authors
in~\cite{br}, where the inequalities
\begin{equation} \label{e.br}
\ed_k(\Sym_n) \geqslant \lfloor n/2 \rfloor \quad \text{($n \geqslant 5$)} \quad \quad \text{and} \quad \quad
\ed_k(\Sym_n) \leqslant n - 3 
\end{equation} 
were proved.
The field $k$ was assumed to be of characteristic $0$ in~\cite{br}, but the proof of the first inequality
in~\eqref{e.br} given there goes through for any field $k$ of characteristic different from $2$.
The second inequality is valid over an arbitrary field $k$. 
A.~Duncan~\cite{duncan2010essential} subsequently showed that in characteristic $0$,
$\ed_k(\Sym_n) \geqslant \lfloor (n+1)/2 \rfloor$ for any $n \geqslant 7$.
The exact value of $\ed_k(\Sym_n)$ is open for every $n \geqslant 8$ and every field $k$, though 
it is widely believed that $\ed_k(\Sym_n)$ should be $n-3$ for every $n \geqslant 5$, 
at least in characteristic $0$.

The purpose of this paper is to show that $\ed_k(\Sym_n)$ can be $\leqslant n - 4$ in prime characteristic.
Our main result is as follows.

\begin{theorem} \label{thm.main}
Let $k$ be a field of odd characteristic $p > 0$ and $n$ is a positive integer whose binary presentation is
$n = 2^{m_1} + 2^{m_2} + \ldots + 2^{m_r}$, where $m_1 > m_2 > \ldots > m_r \geqslant 0$. Assume that $p$ divides $n$ and
$r \geqslant 4$. If $r = 4$, assume further that $k$ contains $\mathbb F_{p^2}$, a field of $p^2$ elements.
Then $\ed_k(\Sym_n) \leqslant n - 4$.
\end{theorem}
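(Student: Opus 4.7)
Proof plan.

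The plan is to exhibit a generically free rational $\Sym_n$-action on a $k$-variety of dimension $n-4$; by the standard dictionary recalled in Section~\ref{sect.prelim} this yields $\ed_k(\Sym_n) \leqslant n - 4$.

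The starting point is the faithful $(n-2)$-dimensional irreducible representation $W$ of $\Sym_n$ that is available because $p \mid n$: namely, $W = V_0 / k \cdot e$, where $V_0 \subset k^n$ is the sum-zero hyperplane in the permutation representation and $e = (1, 1, \ldots, 1)$. The vector $e$ lies in $V_0$ precisely because $p \mid n$, and the quotient $W$ is a faithful irreducible $\Sym_n$-module of dimension $n-2$ for $n \geqslant 5$. Projectivizing yields a faithful $\Sym_n$-action on $\bP(W) \cong \bP^{n-3}$, recovering the classical bound $\ed_k(\Sym_n) \leqslant n-3$. The task is to compress by one further dimension.

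For this additional compression I use the binary hypothesis $r \geqslant 4$ to construct a rational one-parameter family of symmetries on (a birational model of) $\bP(W)$ that commutes with $\Sym_n$. The binary decomposition $n = 2^{m_1} + \ldots + 2^{m_r}$ equips $\Sym_n$ with natural combinatorial structure — via the Young subgroup $\Sym_{2^{m_1}} \times \ldots \times \Sym_{2^{m_r}}$ and the corresponding block decomposition of the permutation representation — and having $r \geqslant 4$ independent binary digits provides enough rational data to define a one-dimensional commutative $k$-group scheme $T$ (a split or twisted form of $\Gm$, depending on $r$) acting birationally on $\bP(W)$ and commuting generically with the $\Sym_n$-action. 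The field-of-definition hypothesis $k \supset \bF_{p^2}$ in the case $r = 4$ is precisely what is needed to split a twisted form of $\Gm$ that unavoidably appears when only four binary digits are available; for $r \geqslant 5$ a fifth digit supplies enough rational structure to use a split $\Gm$ over $k$ directly.

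Quotienting $\bP(W)$ by $T$ yields the desired $(n-4)$-dimensional $\Sym_n$-variety. The main technical obstacle, and the true content of the theorem, is to make the commuting $T$-action explicit over $k$ and then to verify that the induced $\Sym_n$-action on the quotient remains generically free. The former exploits both the combinatorial hypothesis $r \geqslant 4$ and the divisibility $p \mid n$ in an essential way; the latter is a stabilizer analysis, most naturally phrased via a no-name-lemma computation for the $(\Sym_n \times T)$-action on $\bP(W)$, tracing how the (trivial) generic $\Sym_n$-stabilizer on $\bP(W)$ behaves under the passage to the $T$-quotient.
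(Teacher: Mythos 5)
Your plan misidentifies both the geometry and the role of the hypotheses, and the central object you propose (a one-dimensional group $T$ acting on $\bP(W)$ and commuting with $\Sym_n$) almost certainly does not exist in the form you need. The paper takes a structurally different route: it does \emph{not} quotient $\bP(W)$ by a torus. Instead it works with the affine $\Sym_n$-subvariety $X_{1,2}\subset\mathbb A^n$ cut out by $s_1=s_2=0$ (dimension $n-2$), observes that when $p\mid n$ the \emph{two}-dimensional group $B=\Gm\ltimes\mathbb G_a$ (upper-triangular matrices in $\PGL_2$, acting by $x\mapsto\alpha x+\beta\cdot(1,\dots,1)$) preserves $X_{1,2}$ and commutes with $\Sym_n$, and quotients $X_{1,2}$ by $B$ to reach dimension $n-4$. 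Your $\bP(W)$ is not where the compression happens: on $\bP(W)$, Schur's lemma kills any \emph{linear} commuting $\Gm$, and the paper provides no (nor do I see any) nonlinear one. The true output of the construction is the quadric hypersurface $Q=\{\bar q=0\}\subset\bP(W)$ of dimension $n-4$ (equivalently $X_{1,2}/B$), not a $T$-quotient of all of $\bP(W)$.

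This matters because the hypotheses $r\geqslant 4$ and $k\supset\bF_{p^2}$ do not enter where you place them. They are not used to build a commuting group -- the $B$-action exists for every $n$ with $p\mid n$, with no condition on $r$. They are used to prove that restricting to $X_{1,2}$ loses no essential dimension, i.e.\ that $\ed_k(X_{1,2};\Sym_n)=\ed_k(\Sym_n)$ (Proposition~\ref{prop.X12}). That in turn reduces to a trace lemma (Lemma~\ref{lem.trace12}): for the general degree-$n$ \'etale algebra $E/F$ one must find $\alpha\in E\setminus F$ with $\Tr(\alpha)=\Tr(\alpha^2)=0$, which produces a $\Sym_n$-equivariant rational map $\mathbb A^n\dasharrow X_{1,2}^{\rm sm}$ and lets one invoke Proposition~\ref{prop.prelim3}. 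The binary expansion of $n$ enters via Springer's theorem (pass to an odd-degree extension where $E$ splits into factors of $2$-power degree, following \cite{brassil-reichstein}), after which the sought $\alpha$ amounts to an isotropic vector of the system~\eqref{e.system} over a finite field. Chevalley's $C_1$-property handles $r\geqslant 5$; for $r=4$ the relevant binary quadratic form may be anisotropic over $\bF_p$ but becomes isotropic over $\bF_{p^2}$, which is the entire reason for that hypothesis. This has nothing to do with splitting a twisted form of $\Gm$, and the Young-subgroup structure you invoke plays no role. Finally, note that if a commuting $T$ on $\bP(W)$ existed independent of $r$, the conclusion $\ed_k(\Sym_n)\leqslant n-4$ would follow for every $n$ with $p\mid n$ -- a stronger statement than the theorem -- which is further evidence your proposed $T$ cannot exist as described.
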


\begin{remark} \label{rem.main}
(a) Note that $\ed_k(\Sym_5) = 2$ and $\ed_k(\Sym_6) = 3$ for any field $k$ of characteristic $\neq 2$.
Thus, while it may be possible to weaken the assumption that $r \geqslant 4$ in the statement of Theorem~\ref{thm.main}, 
this assumption cannot be dropped entirely.

\smallskip
(b) If $k \subset k'$ is a field extension, then $\ed_k(\Sym_n) \geqslant \ed_{k'}(\Sym_n)$.
In particular, the conclusion of Theorem~\ref{thm.main} can be rephrased as follows: 
$\ed_{\mathbb F_p}(\Sym_n) \leqslant n - 4$ if $r \geqslant 5$
and $\ed_{\mathbb F_{p^2}}(\Sym_n) \leqslant n - 4$ if $r = 4$.

\smallskip
(c) Suppose $l$ is a field of characteristic $0$ and $k$ is a field of characteristic $p > 0$ containing an algebraic closure 
of $\mathbb F_p$. Then $\ed_l(\Sym_n) \geqslant \ed_k(\Sym_n)$ for any $n \geqslant 5$; see~\cite[Corollary 3.4(b)]{brv-mixed}.

\smallskip
(d) The smallest $n$ with $r \geqslant 4$ is $n = 8 + 4 + 2 + 1 = 15$.
\end{remark}

The remainder of this paper will be devoted to proving Theorem~\ref{thm.main}. In Section~\ref{sect.prelim} we will collect the 
background material on essential dimension that will be used in the proof. In Section~\ref{sect.prel12} we will introduce the $\Sym_n$-invariant subvariety
$X_{1, 2}$ of $\mathbb A^n$. In Section~\ref{sect.X12} we will 
show that under the assumptions of Theorem~\ref{thm.main}, the $\Sym_n$-action on $X_{1, 2}$
has maximal possible essential dimension: $\ed_k(X_{1, 2}; \Sym_n) = \ed_k(\Sym_n)$. 
Finally, in Section~\ref{sect.conclusion} we will complete the proof 
of Theorem~\ref{thm.main} by showing that $\ed_k(X_{1, 2}, \Sym_n) \leqslant n-4$.

\section{Preliminaries on essential dimension}
\label{sect.prelim}

Throughout this paper $k$ will denote an arbitrary base field, $\overline{k}$ will denote an algebraic closure of $k$, and $G$ will denote an abstract finite group. Unless otherwise specified,  algebraic varieties, morphisms, rational maps, group actions, etc., will be assumed to be defined over $k$. We will refer to a variety $X$ with an action of $G$ as a $G$-variety.  
We will say that the $G$-action on $X$ (or equivalently, the $G$-variety $X$) is 

\begin{itemize}
\smallskip \item
faithful, if the induced morphism $G \to \Aut(X)$ is injective,

\smallskip \item
primitive, if $G$ transitively permutes the irreducible components of $X(\overline{k})$,

\smallskip \item
generically free, if there exists a dense open subset $U \subset X$ such that for every $\overline{k}$-point $u \in U$, the stabilizer
$\Stab_G(u)$ of $u$ in $G$ is trivial.
\end{itemize}

A generically free action is clearly faithful. The converse holds if $X$ is irreducible, but not in general (not even if the $G$-action on $X$ is primitive).

Let $X$ be a generically free primitive $G$-variety. We will refer to a $G$-equivariant dominant rational map $X \dasharrow Y$ as a $G$-{\em compression}, if the $G$-action on $Y$ is also generically free.  The minimal dimension
of $Y$, taken over all $G$-compressions $X \dasharrow Y$ is called the {\em essential dimension} of $X$ and is denoted by $\ed_k(X; G)$.
The largest value of $\ed_k(X; G)$, as $X$ ranges over all generically free primitive $G$-varieties defined over $k$, is called the essential dimension of $G$ over $k$ and is denoted by $\ed_k(G)$.

We now recall two results about essential dimension that will be needed in the proof of Theorem~\ref{thm.main}. Note that Proposition~\ref{prop.prelim1} shows, in particular, that $\ed_k(G) < \infty$ for any $G$ and $k$.

\begin{proposition} \label{prop.prelim1} Let $G \hookrightarrow \GL(V)$ be a faithful finite-dimensional representation of
$G$. Denote the underlying affine space by $\mathbb A(V)$. Then
$\ed_k(G) = \ed_k(\mathbb A(V); G)$.
\end{proposition}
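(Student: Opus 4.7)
The plan is to establish the two inequalities $\ed_k(\bA(V); G) \leq \ed_k(G)$ and $\ed_k(G) \leq \ed_k(\bA(V); G)$. The first is immediate from the definition of $\ed_k(G)$ as a supremum, once one checks that $\bA(V)$ is itself a generically free primitive $G$-variety. Primitivity follows from irreducibility of $\bA(V)$, and generic freeness from the fact that $G$ is finite and acts faithfully and linearly: the non-free locus is the union $\bigcup_{g \in G \setminus \{e\}} V^g$ of proper linear subspaces, which is a proper closed subset of $\bA(V)$.

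For the reverse inequality, let $X$ be an arbitrary generically free primitive $G$-variety. Consider $Y = X \times \bA(V)$ with the diagonal $G$-action; it is again generically free (stabilizers in $Y$ are contained in those in $X$) and primitive (since $\bA(V)$ is irreducible). The second projection $Y \to \bA(V)$ is $G$-equivariant and dominant, so composing any $G$-compression $\bA(V) \dashrightarrow Z$ with it produces a $G$-compression $Y \dashrightarrow Z$, giving $\ed_k(Y; G) \leq \ed_k(\bA(V); G)$. It therefore remains to show $\ed_k(X; G) \leq \ed_k(Y; G)$, and for this I would invoke the no-name lemma: applied to the $G$-equivariant trivial vector bundle $Y \to X$, it says that the quotient $Y/G \to X/G$ is birationally trivial as a vector bundle and in particular admits a rational section $s \colon X/G \dashrightarrow Y/G$.

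Given any $G$-compression $Y \dashrightarrow Z$ of dimension $d = \ed_k(Y; G)$, it descends to a rational map $f \colon Y/G \dashrightarrow Z/G$ along which the generic torsor $Z \to Z/G$ pulls back to $Y \to Y/G$. Because $Y \to Y/G$ is itself the pullback of $X \to X/G$ along the projection $Y/G \to X/G$, of which $s$ is a section, the torsor $Z \to Z/G$ pulls back along $f \circ s \colon X/G \dashrightarrow Z/G$ to $X \to X/G$. By the universal property of fiber products this yields a $G$-equivariant rational map $X \dashrightarrow Z$ whose image $W$ is a $G$-invariant subvariety of $Z$ of dimension at most $d$. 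The main technical point is to verify that $W$ lies in the generically free locus of $Z$; this holds because the isomorphism of $G$-torsors $X \cong X/G \times_{Z/G} Z$ over $X/G$ can exist only when the generic fiber of $Z \to Z/G$ above the image of $f \circ s$ is a full, free $G$-orbit, so that the image is forced into the free locus. Hence $X \dashrightarrow W$ is a $G$-compression, giving $\ed_k(X; G) \leq \dim W \leq d \leq \ed_k(\bA(V); G)$; taking the supremum over $X$ yields the desired bound.
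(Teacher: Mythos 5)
Your argument is correct and reproduces the standard no-name-lemma proof from the references the paper cites; the paper itself gives only the citation and no proof, so there is nothing to compare line by line. Two small points worth tightening. First, merely knowing that $Y/G \to X/G$ has a rational section is automatic (take the image of the zero section) and does not by itself guarantee that $f \circ s$ is defined; what you actually need is the full strength of the no-name lemma, namely that $Y/G$ is birational to $X/G \times \mathbb A^{\dim V}$ over $X/G$, which lets you choose $s$ to be a ``generic constant'' section whose image meets the domain of definition of $f$ (if $k$ is finite, choose the constant in the infinite field $k(X/G)$ and argue over the generic point, after dismissing the trivial case $\dim X = 0$). Second, before calling $X \dashrightarrow W$ a $G$-compression you should also record that $W$ is primitive; this is immediate because $X$ is primitive and $X \dashrightarrow W$ is dominant and $G$-equivariant, so $G$ permutes the (equidimensional) images of the components of $X$ transitively. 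With those additions the proof is complete.
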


\begin{proof} See~\cite[Theorem 3.1]{br} or~\cite[Proposition 4.11]{berhuy-favi} or~\cite[Propositions 3.1 and 3.11]{merkurjev-survey}.
\end{proof}

Following~\cite{brv-mixed}, we will say that a finite group $G$ is weakly tame over a field $k$ of characteristic $p \geqslant 0$ if
$G$ has no normal $p$-subgroups, other than $\{ 1 \}$. If $p = 0$ (or if $p$ does not divide $|G|$),
then $G$ is always weakly tame over $k$.

\begin{proposition} \label{prop.prelim3} Suppose that a finite group $G$ is weakly tame group over a field $k$.
Let $X$ and $Y$ be generically free primitive $G$-varieties over $k$. Assume that there exists 
a (not necessarily dominant) $G$-equivariant rational map $f \colon Y \dasharrow X^{\rm sm}$, where 
$X^{\rm sm}$ denotes the smooth locus of $Y$. Then $\ed_k(X) \geqslant \ed_k(Y)$.
\end{proposition}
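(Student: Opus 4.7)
The plan is to show that any $G$-compression of $X$ realizing $\ed_k(X;G)$ can be composed with $f$ to yield a $G$-compression of $Y$ of no larger dimension. Since $X^{\mathrm{sm}}$ is a dense open $G$-invariant subvariety of $X$, I may first replace $X$ by $X^{\mathrm{sm}}$, which does not change the essential dimension; thus I may assume $X$ is smooth. Next, I choose a $G$-compression $\pi \colon X \dasharrow Z$ with $\dim Z = \ed_k(X;G)$, arranging by $G$-equivariant resolution that $Z$ is smooth as well.

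The composition $g := \pi \circ f \colon Y \dasharrow Z$ is a $G$-equivariant rational map, which need not be dominant. Let $Z_1 := \overline{g(Y)} \subseteq Z$, a closed $G$-invariant subvariety; then $g$ factors through a dominant $G$-equivariant rational map $Y \dasharrow Z_1$. If I can show that $Z_1$ is generically free as a $G$-variety, then this factorization is a $G$-compression of $Y$, yielding
\[
\ed_k(Y;G) \leq \dim Z_1 \leq \dim Z = \ed_k(X;G),
\]
which is the desired conclusion.

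The heart of the proof is therefore the generic freeness of $Z_1$. Let $N$ denote the kernel of the $G$-action on $Z_1$, which is a normal subgroup of $G$; the aim is to show $N = \{1\}$. The idea is to exploit the smoothness of $Z$ together with the factorization through the smooth variety $X^{\mathrm{sm}}$: at a generic point $z \in Z_1$, the stabilizer of $z$ acts linearly on $T_z Z$, and one can perform a local slice analysis. Because $Y$ is generically free and the image lies in $X^{\mathrm{sm}}$, a weakly-tame version of Luna's slice theorem forces a nontrivial $N$ to contain a nontrivial normal $p$-subgroup of $G$, where $p = \Char k > 0$. This contradicts the weak tameness of $G$, so $N = \{1\}$ and $Z_1$ is generically free.

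I expect the main obstacle to be precisely this local analysis --- deducing that a nontrivial kernel $N$ must contain a nontrivial normal $p$-subgroup of $G$. It draws on the weakly-tame generalization of the Reichstein--Youssin ``going-down'' principle developed in~\cite{brv-mixed}, which one would either cite as a black box or briefly reprise in the proof rather than reprove from scratch.
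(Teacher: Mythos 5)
The paper does not prove this proposition at all; it simply cites Reichstein–Scavia, \cite[Theorem 1.6]{reichstein2022behavior}. Your proposal is an attempt to sketch a proof from scratch, in the spirit of the Reichstein–Youssin ``going-down'' method and its weakly-tame generalization. The outline (replace $X$ by $X^{\rm sm}$, choose a minimal compression $\pi\colon X\dasharrow Z$, compose with $f$, argue that the image is still generically free) is indeed the right one, but it contains two genuine gaps.

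First, and most seriously, the composition $g := \pi\circ f$ is simply asserted to exist as a rational map. Since $f$ is \emph{not} assumed dominant, $\overline{f(Y)}$ may be a proper closed $G$-subvariety of $X^{\rm sm}$, and nothing prevents it from being entirely contained in the indeterminacy locus of $\pi$ (which is also a proper closed subset). In that case $\pi\circ f$ is not defined at the generic point of $Y$, and the rest of the argument never gets off the ground. Fixing this requires first resolving the indeterminacy of $\pi$ by a proper $G$-equivariant modification $\tilde{X}\to X$ so that $\tilde{\pi}\colon\tilde{X}\to Z$ is a morphism, and then lifting $f$ to $Y\dasharrow\tilde{X}$; that lift exists and is $G$-equivariant, but verifying this (and that the image of the composite still carries a generically free $G$-action) is exactly the technical content of the cited theorem, not a routine step. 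Second, you invoke $G$-equivariant resolution of singularities to arrange that $Z$ is smooth. The paper works over a base field of characteristic $p>0$, where resolution of singularities is not available in general dimension, so this tool cannot be assumed. (One can dodge this particular use by working with the smooth locus of $Z$ or with the graph closure of $\pi$, but as written the step is unjustified.) A smaller point: showing that the kernel $N$ of the $G$-action on $Z_1$ is trivial gives faithfulness, not generic freeness; when $Y$ is primitive but reducible these are not the same, and what the slice analysis must actually produce is triviality of the generic stabilizer. In short, your plan correctly identifies the result as an application of the weakly-tame going-down principle, but the composition step and the appeal to resolution are real holes, and these are precisely what the citation to \cite[Theorem 1.6]{reichstein2022behavior} is doing the work of filling.
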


\begin{proof} 
See~\cite[Theorem 1.6]{reichstein2022behavior}. 
\end{proof}

\section{Preliminaries on the affine quadric $X_{1, 2}$}
\label{sect.prel12}

Let $X_{1, 2}$ be the closed $\Sym_n$-invariant subvariety of $\mathbb A^n$ given by
\[ s_1(x_1, \ldots, x_n) = s_2(x_1, \ldots, x_n) = 0 , \]
where $s_i$ is the $i$th elementary symmetric polynomial and $\Sym_n$ acts on $\mathbb A^n$ 
by permuting the variables in the natural way. If $\Char(k) \neq 2$, then equivalently,
\begin{equation} \label{e.X12}
X_{1, 2} := \{ \, (x_1, \ldots, x_n) \in \mathbb A^n \, | \, x_1+ \ldots + x_n = x_1^2 + \ldots + x_{n}^2 = 0 \, \} . 
\end{equation}
Let $\Delta$ be the discriminant locus in $\mathbb A^n$, i.e., the union of the hyperplanes $x_i = x_j$ for various pairs $(i, j)$, where $1 \leqslant i < j \leqslant n$. Note that the symmetric group $\Sym_n$ acts freely (i.e., with trivial stabilizers) 
on $\mathbb A^n \setminus \Delta$.

\begin{lemma} \label{lem.X12} Assume $n \geqslant 5$. Then

\smallskip
(a) $X_{1, 2}$ is absolutely irreducible.

\smallskip
(b) $X_{1,2} \setminus \Delta$ is a dense open subset of $X_{1, 2}$. In particular, the $\Sym_n$-action on $X_{1, 2}$ is generically free.

\smallskip
(c) The singular locus of $X_{1, 2}$ is $X_{1, 2} \cap D$, where $D$ is the small diagonal $D$ given by $x_1 = \ldots = x_n$.
\end{lemma}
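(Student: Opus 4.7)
The plan is to prove the three parts in the order (a), (c), (b), working throughout over $\overline{k}$ and assuming $\Char(k) \neq 2$, in keeping with the setting of Theorem~\ref{thm.main}. I would first prove the absolute irreducibility in (a) by recognizing $X_{1, 2}$ as a quadric: on the hyperplane $V := \{s_1 = 0\} \cong \bA^{n-1}$, the Newton identity $x_1^2 + \ldots + x_n^2 = s_1^2 - 2 s_2 = -2 s_2$ shows that the equation $s_2 = 0$ is equivalent to the vanishing of the quadratic form $q := x_1^2 + \ldots + x_n^2$ restricted to $V$. The associated standard bilinear form $\langle x, y \rangle = \sum_i x_i y_i$ on $\bA^n$ satisfies $V = (1, \ldots, 1)^{\perp}$, so the radical of $q|_V$ equals $V \cap V^{\perp} = V \cap \langle (1, \ldots, 1) \rangle$. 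This is $\langle (1, \ldots, 1) \rangle$ when $\Char(k) \mid n$ (so that $(1, \ldots, 1) \in V$) and $\{0\}$ otherwise. In either case $q|_V$ has rank at least $n - 2 \geqslant 3$, and after splitting off its radical (possible since $\Char(k) \neq 2$) it becomes a nondegenerate quadratic form in at least three variables. Such a form is irreducible as a polynomial (it would otherwise factor as a product of two linear forms and have rank $\leqslant 2$), so $X_{1, 2}$ is absolutely irreducible and of dimension $n - 2$.

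For (c) I would apply the Jacobian criterion to the ideal $(s_1, s_2)$. We have $\partial s_1 / \partial x_i = 1$ and $\partial s_2 / \partial x_i = s_1 - x_i$, which becomes $-x_i$ on $X_{1, 2}$. Thus the Jacobian matrix
\[
J = \begin{pmatrix} 1 & 1 & \ldots & 1 \\ -x_1 & -x_2 & \ldots & -x_n \end{pmatrix}
\]
drops rank below $2$ precisely when its two rows are proportional, i.e., when $x_1 = x_2 = \ldots = x_n$. Since $X_{1, 2}$ is irreducible of codimension $2$ by (a), the Jacobian criterion yields $\Sing(X_{1, 2}) = X_{1, 2} \cap D$.

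Finally, I would deduce (b) by a dimension count. Suppose $X_{1, 2} \subset \{x_i = x_j\}$ for some pair $i < j$. By the $\Sym_n$-invariance of $X_{1, 2}$, this forces $X_{1, 2} \subset \bigcap_{k < l}\{x_k = x_l\} = D$, contradicting $\dim X_{1, 2} = n - 2 \geqslant 3 > 1 = \dim D$ for $n \geqslant 5$. Hence every hyperplane $\{x_i = x_j\}$ meets $X_{1, 2}$ in a proper closed subset, so $X_{1, 2} \cap \Delta$ is a proper closed subset of $X_{1, 2}$ and its complement $X_{1, 2} \setminus \Delta$ is dense open. Since $\Sym_n$ acts freely on $\bA^n \setminus \Delta$, it acts freely on this dense open subset, giving generic freeness of the action on $X_{1, 2}$. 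The subtlest step is the irreducibility analysis in (a), since when $\Char(k) \mid n$ the restriction $q|_V$ is degenerate and one must pass to $V / \operatorname{rad}(q|_V)$ to apply the standard irreducibility criterion for a nondegenerate quadric; once this is handled, parts (c) and (b) follow by short routine arguments.
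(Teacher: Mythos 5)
Your proof is correct, but it takes a genuinely different route from the paper. The paper simply observes that $X_{1,2}$ is the affine cone over the projective quadric $\mathbb P(X_{1,2}) \subset \mathbb P^{n-1}$ and cites~\cite[Lemma 2.1]{brassil-reichstein} for its absolute irreducibility (part (c) there), its non-containment in $\mathbb P(\Delta)$ (part (f)), and the description of its singular locus (part (b)). You instead give a self-contained argument: for (a) you identify $X_{1,2}$ with the zero locus of $q|_V$ where $V = \{s_1 = 0\}$ (via the Newton identity $s_1^2 - 2s_2 = \sum x_i^2$, using $\Char(k) \neq 2$), compute the radical of $q|_V$ as $V \cap V^\perp$, observe the rank is at least $n-2 \geqslant 3$ in either the $p \mid n$ or $p \nmid n$ case, and conclude irreducibility from the standard fact that a quadratic form of rank $\geqslant 3$ over $\ol k$ cannot factor into linear forms. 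For (c) you apply the Jacobian criterion to $(s_1, s_2)$, correctly computing $\partial s_2/\partial x_i = s_1 - x_i = -x_i$ on $X_{1,2}$ so that the rank drop occurs precisely on the small diagonal $D$; this requires knowing that $(s_1, s_2)$ is the radical ideal, which your argument in (a) supplies since it shows $(s_1, q)$ is prime in $k[\bA^n]$. For (b) you use $\Sym_n$-invariance and a dimension count ($\dim X_{1,2} = n-2 > 1 = \dim D$) to see that $X_{1,2} \not\subset \Delta$. This direct approach buys self-containedness and makes explicit where the hypotheses $\Char(k) \neq 2$ and $n \geqslant 5$ enter, whereas the paper's citation is shorter but opaque. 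One small caveat: Lemma~\ref{lem.X12} as stated in the paper does not explicitly restrict to $\Char(k) \neq 2$; your argument is tied to that case via the Newton-identity reduction, which is fine for the purposes of Theorem~\ref{thm.main} (where $p$ is odd) but would need adjustment in characteristic $2$.
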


\begin{proof} By definition $X_{1, 2}$ is the affine cone over the projective variety
$\mathbb P(X_{1, 2}) \subset \mathbb P^{n-1}$ given by 
\[ x_1 + \ldots + x_n = x_1^2 + \ldots + x_n^2 = 0 \, . \]
Parts (a), (b) and (c) of the Lemma now follow from~\cite[Lemma 2.1]{brassil-reichstein}, which shows
that $\mathbb P(X_{1, 2})$ is absolutely irreducible (Lemma 2.1(c)),
$\mathbb P(X_{1, 2})$ is not contained in $\mathbb P(\Delta)$ (Lemma 2.1(f)), and the singular locus of $\mathbb P(X_{1, 2})$ is $\mathbb P(X_{1, 2}) \cap \{ (1: 1: \ldots : 1)\}$ (Lemma 2.1(b)). Note in \cite{brassil-reichstein}, $\mathbb P(X_{1, 2})$ is denoted by $Y_{n, 2}$ and $\Delta$ is denoted by $\Delta_n$. 
\end{proof}

\section{Reduction to $X_{1, 2}$}
\label{sect.X12}
The purpose of this section is to prove the following.

\begin{proposition} \label{prop.X12} Assume $k$ and $n$ are as in the statement of Theorem~\ref{thm.main}. 
Then \[ \ed_k (X_{1, 2}; \Sym_n) = \ed_k(\Sym_n). \]
\end{proposition}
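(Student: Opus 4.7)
The plan is to establish both inequalities. The inequality $\ed_k(X_{1,2};\Sym_n)\le\ed_k(\Sym_n)$ is immediate from the definition of $\ed_k(\Sym_n)$ as the largest value of $\ed_k(Z;\Sym_n)$ over generically free primitive $\Sym_n$-varieties $Z$: by Lemma~\ref{lem.X12}, $X_{1,2}$ is absolutely irreducible (hence primitive) and carries a generically free $\Sym_n$-action, so it qualifies.

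For the reverse inequality I would invoke Proposition~\ref{prop.prelim3} with $X:=X_{1,2}$ and $Y:=\mathbb{A}^n$, the latter equipped with the standard permutation representation of $\Sym_n$. That representation is faithful, so Proposition~\ref{prop.prelim1} gives $\ed_k(\mathbb{A}^n;\Sym_n)=\ed_k(\Sym_n)$. To apply Proposition~\ref{prop.prelim3} I must verify (i) that $\Sym_n$ is weakly tame over $k$, and (ii) the existence of a $\Sym_n$-equivariant rational map $\phi\colon\mathbb{A}^n\dashrightarrow X_{1,2}^{\mathrm{sm}}$. Condition (i) is clear: for $n\ge 5$ the only normal subgroups of $\Sym_n$ are $\{1\}$, $A_n$, $\Sym_n$, and none of these is a nontrivial $p$-subgroup when $p$ is odd. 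Granting (ii), Proposition~\ref{prop.prelim3} delivers $\ed_k(X_{1,2};\Sym_n)\geq\ed_k(\mathbb{A}^n;\Sym_n)=\ed_k(\Sym_n)$, closing the argument.

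The construction of $\phi$ in (ii) is the substance of the proof. Equivariance forces the coordinate form $y_i=H(x_i)$ for some $H(t)\in k(s_1,\ldots,s_n)(t)$, and the conditions for the image to lie in $X_{1,2}^{\mathrm{sm}}$ are $\sum_i H(x_i)=0$ (so that $s_1(y)=0$), $\sum_i H(x_i)^2=0$ (equivalent, together with the first, to $s_2(y)=0$ in odd characteristic), and $H$ not a constant in $t$ (so that not all $H(x_i)$ coincide, forcing the image to avoid the singular diagonal $D$). The first condition has an infinite-dimensional solution space thanks to the Frobenius identity $\sum_i x_i^p=s_1^p$ valid in characteristic $p$: for any polynomial $g(t)\in k(s_1,\ldots,s_n)[t]$, setting $g_*:=\sum_i g(x_i)\in k(s_1,\ldots,s_n)$ and
\[
H(t)\;:=\;g(t)\;-\;\frac{g_*}{s_1^{\,p}}\,t^p
\]
automatically yields $\sum_i H(x_i)=g_*-s_1^p\cdot g_*/s_1^p=0$ on the dense open $\{s_1\ne 0\}\subset\mathbb A^n$.

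The main obstacle is then to tune $g$ so that the second condition $\sum_i H(x_i)^2=0$ is also met. Allowing $g=\sum_j c_j g_j$ to vary in a well-chosen finite-dimensional $k(s_1,\ldots,s_n)$-linear family of polynomials $g_j$ turns this requirement into the isotropy of a specific quadratic form $Q$ in the coefficients $(c_j)$, with values in $k(s_1,\ldots,s_n)$. I expect this is precisely the step at which the arithmetic hypotheses of Theorem~\ref{thm.main}, in particular $r\ge 4$ (together with $\mathbb F_{p^2}\subseteq k$ in the borderline case $r=4$), enter: being exactly what is needed for a Chevalley--Warning or Tsen--Lang style result to guarantee that $Q$ represents zero nontrivially over the rational function field $k(s_1,\ldots,s_n)$. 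Once such an $H$ is exhibited, non-constancy in $t$ (and hence avoidance of $D$) is automatic from positive $t$-degree, and Proposition~\ref{prop.prelim3} completes the proof.
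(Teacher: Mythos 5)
Your structural framework is correct and matches the paper's: the inequality $\ed_k(X_{1,2};\Sym_n)\le\ed_k(\Sym_n)$ follows from Proposition~\ref{prop.prelim1} applied to the permutation representation (or directly from the definition), the reverse inequality via Proposition~\ref{prop.prelim3} requires a $\Sym_n$-equivariant rational map $\mathbb{A}^n\dashrightarrow X_{1,2}^{\rm sm}$, and such a map is equivalent to finding $\alpha\in L_n\setminus K_n$ with $\Tr(\alpha)=\Tr(\alpha^2)=0$. This is exactly the paper's reduction, and it is precisely Lemma~\ref{lem.trace12}.

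But there is a genuine gap: you do not actually prove the existence of $\alpha$, and the strategy you sketch for the quadratic condition would fail. You propose to make a quadratic form $Q$ over $K_n = k(s_1,\ldots,s_n)$ isotropic by ``a Chevalley--Warning or Tsen--Lang style result.'' However, $K_n$ is not a $C_1$ field: over $\mathbb F_p$ it is only $C_{n+1}$ by Tsen--Lang, so a quadratic form needs $>2^{n+1}$ variables to be guaranteed isotropic, while your Ansatz $H(t)$ provides at most $n$ coefficients. And Chevalley--Warning applies to finite fields, not to function fields. So the place where ``the arithmetic hypotheses enter'' cannot simply be an application of these theorems over $K_n$. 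The paper's actual argument is quite different at this point: it descends the trace form to the $(n-2)$-dimensional quotient $\overline{E^{\rm trace0}}$, invokes \emph{Springer's theorem} (isotropy of quadratic forms is insensitive to odd-degree extensions) to pass to an odd-degree extension $F'/F$ over which $E$ splits as $E_1\times\cdots\times E_r$ with $\dim E_i = 2^{m_i}$ (this is where the binary expansion of $n$ enters, via \cite[Proposition 5.1]{brassil-reichstein}), restricts to the diagonal $F^r\subset E$ where the trace form has integer coefficients $2^{m_i}$, and thereby reduces the isotropy question to the explicit system~\eqref{e.system} over $\mathbb F_p$ itself. Only then does Chevalley's theorem ($\mathbb F_p$ is $C_1$) apply, giving a solution when $r-1>3$, with the borderline case $r=4$ handled by passing to $\mathbb F_{p^2}$. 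The Springer reduction and the étale-algebra splitting are the missing ideas in your proposal; without them the Frobenius trick $H(t)=g(t)-\tfrac{g_*}{s_1^p}t^p$ (while a valid way to enforce the linear condition on $\{s_1\ne 0\}$) does not lead anywhere.
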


Note that $\ed(X_{1, 2}; \Sym_n)$ is well defined because $X_{1, 2}$ is an absolutely irreducible generically free 
$\Sym_n$-variety by Lemma~\ref{lem.X12}. Lemma~\ref{lem.X12} applies here because our assumptions 
force $n$ to be $\geqslant 15$; see Remark~\ref{rem.main}(d).

Our proof of Proposition~\ref{prop.X12} will be based on the following.

\begin{lemma} \label{lem.trace12} Assume $k$ and $n$ are as in the statement of Theorem~\ref{thm.main}. 
Let $F$ be a field containing $k$ and $E/F$ be an $n$-dimensional \'etale algebra. Then there exists an
element $\alpha \in E \setminus F$ such that $\Tr(\alpha) = \Tr(\alpha^2) = 0$. Here $\Tr$ denotes the trace function $\Tr_{E/F} \colon E \to F$.
\end{lemma}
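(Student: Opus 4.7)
The plan is to translate the lemma into the question of whether a certain non-degenerate quadratic form over $F$ has a non-zero isotropic vector, and then to exhibit one using the binary decomposition of $n$. Set $H := \ker(\Tr_{E/F}) \subset E$, an $F$-hyperplane. Since $p \mid n$ we have $\Tr(1_E) = n = 0$, so $1_E \in H$. The bilinear form $B(\alpha, \beta) := \Tr_{E/F}(\alpha \beta)$ is the trace form of the \'etale algebra $E$ and is non-degenerate; because $H = (F \cdot 1_E)^{\perp}$, its radical on $H$ is exactly $F \cdot 1_E$. Hence the quadratic form $q(\alpha) = \Tr_{E/F}(\alpha^2)$ descends to a non-degenerate quadratic form $\bar q$ on $\bar H := H / (F \cdot 1_E)$ of dimension $n - 2$, and elements $\alpha \in E \setminus F$ with $\Tr(\alpha) = \Tr(\alpha^2) = 0$ correspond exactly to non-zero isotropic vectors of $\bar q$.

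To produce such an isotropic vector I would use the binary expansion $n = 2^{m_1} + \cdots + 2^{m_r}$. When $E$ admits a splitting $E = E_1 \times \cdots \times E_r$ with $\dim_F E_i = 2^{m_i}$, the orthogonal idempotents $e_i = 1_{E_i}$ satisfy $B(e_i, e_j) = \delta_{ij} \cdot 2^{m_i}$, and so span an $r$-dimensional subspace $W \subseteq E$ on which $q$ has the diagonal shape $\langle 2^{m_1}, \ldots, 2^{m_r} \rangle$. The subform $\bar q|_{(W \cap H)/(F \cdot 1_E)}$ is then a non-degenerate quadratic form of dimension $r - 2$. For $r \geqslant 5$ this dimension is at least $3$, and since every non-degenerate form of dimension $\geqslant 3$ over $\mathbb F_p$ is isotropic, the form is isotropic over any $F \supseteq \mathbb F_p$. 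For $r = 4$ the dimension is $2$, and a direct discriminant computation shows that the subform becomes isotropic precisely once $F \supseteq \mathbb F_{p^2}$ (the hypothesis provides, e.g., a square root of $-1$), matching the boundary hypothesis of Theorem~\ref{thm.main}.

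The main obstacle is that the idempotents above generally do not exist over $F$: if $E$ is a field, it admits no non-trivial idempotents at all, and in general the simple factors of $E$ will not line up with the binary partition of $n$. I would expect the proof to handle this by constructing the subspace $W$ intrinsically -- for instance by locating an \'etale $F$-subalgebra of $E$ of rank $r$ whose trace form has the desired diagonal shape, or by a Galois-theoretic descent through the Galois closure of $E$ that produces $\Gal(\overline F/F)$-equivariant substitutes for the missing block idempotents inside $E \otimes_F \overline F$. Ensuring that such a configuration can be found for every \'etale $E/F$ is where the combinatorial strength of the assumption $r \geqslant 4$ (and the auxiliary $\mathbb F_{p^2}$ hypothesis in the borderline case) should really enter.
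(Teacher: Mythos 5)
Your reduction of the lemma to the isotropy of the quadratic form $\overline{q}$ on $H/(F\cdot 1_E)$ is exactly the paper's first step, and the computation in the split case $E = E_1 \times \cdots \times E_r$ also matches the paper (the paper passes to the hyperplane $c_r = 0$ rather than taking the quotient by $F\cdot 1_E$, but that is a cosmetic difference). You also correctly identify the key difficulty: the block idempotents $e_i$ need not exist over $F$, e.g.\ if $E$ is a field, so the subspace $W$ spanned by them is simply not available in general.

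However, your proposed ways to fill that gap -- constructing $W$ ``intrinsically'' via some rank-$r$ \'etale subalgebra, or by Galois descent of idempotents from $E \otimes_F \overline{F}$ -- are not what makes the argument work, and it is not clear either of them can be made to. The paper's actual device is \emph{Springer's theorem}: a quadratic form over $F$ is isotropic if and only if it becomes isotropic over some (equivalently, every) odd-degree extension $F'/F$. Combined with \cite[Proposition 5.1]{brassil-reichstein}, which produces an odd-degree extension $F'/F$ over which $E \otimes_F F'$ does split as $E_1 \times \cdots \times E_r$ with $\dim_{F'} E_i = 2^{m_i}$, one may harmlessly replace $F$ by $F'$ and run your split-case computation. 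This sidesteps the descent problem entirely rather than solving it. So while your setup and your endgame are right, the bridge between them -- the one genuinely nontrivial idea in the proof -- is missing, and the candidates you suggest for it are off target. A smaller point: in the $r=4$ case the relevant discriminant is merely some element of $\mathbb{F}_p^\times$, not necessarily $-1$; what matters is that $\mathbb{F}_{p^2}$ contains a square root of every element of $\mathbb{F}_p$, so the residual binary form becomes isotropic there.
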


\begin{proof} Let $E^{\rm{trace0}}$ be the kernel of the trace map $\Tr \colon E \to F$. It is an $(n-1)$-dimensional 
$F$-vector subspace of $E$. Since $n$ is divisible by $p = \Char(k) = \Char(F)$, $F$ is contained in $E^{\rm{trace0}}$. Consider the symmetric
bilinear form $\psi \colon E^{\rm{trace0}} \to F$ given by $\psi(a, b) = \Tr(ab)$ and the associated quadratic form $q \colon E^{\rm{trace0}} \to F$ given by 
$q(a) = \psi(a, a) = \Tr(a^2)$. By the definition of $E^{\rm{trace0}}$, $\psi(1, a) = \Tr(a) = 0$ for every $a \in E^{\rm{trace0}}$. Thus $F = F \cdot 1$ lies in the radical of $q \colon E^{\rm{trace0}} \to F$. Consequently, the bilinear form $\psi$ and the quadratic form $q$
descend to a bilinear form $\overline{\psi}$ and a quadratic form $\overline{q}$ on the $(n-2)$-dimensional quotient space 
$\overline{E^{\rm{trace0}}} = E^{\rm{trace0}}/(F \cdot 1)$. 

Elements $\alpha \in E$ such that $\Tr(\alpha) = \Tr(\alpha^2) = 0$ are precisely the isotropic vectors of $q$ in $E^{\rm{trace0}}$.
We are looking for an element $\alpha \in E^{\rm{trace0}} \setminus F$ whose image in $\overline{E^{\rm{trace0}}}$ is an isotropic vector for $\overline{q}$. 
In other words, the lemma is equivalent to the assertion that the quadratic form $\overline{q} \colon \overline{E^{\rm{trace0}}} \to F$ is isotropic.

To show that $\overline{q}$ is isotropic, we will appeal to Springer's theorem: 
If $F'/F$ is a field extension of odd degree, then $\overline{q}$ is isotropic in $\overline{E^{\rm{trace0}}}$ if and only 
if it becomes isotropic over $F'$.
By \cite[Proposition 5.1]{brassil-reichstein} we can choose $F'/F$ so that $E' = E \otimes_{F} F'$ is an \'etale algebra 
of degree $n$ over $F'$ the form $E' = E_1 \times E_2 \times \ldots \times E_r$, where $E_i$ is an \'etale algebra 
over $F'$ of degree $2^{m_i}$ for each $i = 1, \ldots, r$. After replacing $F$ by $F'$ and $E$ by $E'$, we may assume 
without loss of generality that, in fact,
\[ E = E_1 \times E_2 \times \ldots \times E_r, \]
where $E_i$ is an $2^{m_i}$-dimensional \'etale algebra over $F$ for each $i = 1, \ldots, r$. Note that 
\[ \Tr_{E/F}(a) = \Tr_{E_1/F}(a_1) + \ldots + \Tr_{E_r/F}(a_r) = 0  \]
for any $a = (a_1, \ldots, a_r) \in E_1 \times \ldots \times E_r = E$.
In particular, we now have an $r$-dimensional $F$-subalgebra of $E$,  
\[ F \times \ldots \times F \; \text{($r$ times)} \; \subset E_1 \times \ldots \times E_r = E , \]
where the trace form is quite transparent: $\Tr_{E/F} \colon (c_1, \ldots, c_r) \mapsto 2^{m_1} c_1 + \ldots + 2^{m_r} c_r$ for any 
$c_1, \ldots, c_r \in F$. Specifically, we would like to show that $\overline{q}$ has an isotropic vector in $V/F$, where 
\[ V = \{  (c_1, \ldots, c_r) \in F \times \ldots \times F \, | \, \Tr_{E/F}(c_1, \ldots, c_r) = 0 \}. \]
Equivalently, we would like to show that $q$ has an isotropic vector in $V \cap H$, where $H$ is a $F$-hyperplane in $F \times \ldots \times F$ ($r$ times)
which does not contain the unit element $(1, \ldots, 1)$. For example, we can take $H$ to be the hyperplane $c_r = 0$.
Explicitly, we are looking for a non-trivial solution to the system
\begin{equation} \label{e.system}
\begin{cases} 2^{m_1} c_1 + \ldots + 2^{m_{r-1}} c_{r-1} = 0 \\ 2^{m_1} c_1^2 + \ldots + 2^{m_{r-1}} c_{r-1}^2 = 0
\end{cases} 
\end{equation}
Note that all the coefficients in this system are integers. 
By a theorem of Chevalley~\cite[Theorem 5.2.1]{pfister}, 
every finite field is a $C_1$-field, and consequently, the system~\eqref{e.system} of one linear and one quadratic equations
in $r-1$ variables has a non-trivial solution over $\mathbb F_p$, as long as $r - 1 > 3$. This completes the proof of 
Lemma~\ref{lem.trace12} for $r \geqslant 5$.

 If $r = 4$, then we may or may not be able to find a non-trivial solution of the system~\eqref{e.system} over $\mathbb F_p$, but there will certainly be one over some quadratic extension of $\mathbb F_p$. Since $\mathbb F_p$ has a unique quadratic extension, $\mathbb F_{p^2}$, and we are assuming that
 $k$ contains a copy of $\mathbb F_{p^2}$ , we conclude that in this case the system~\eqref{e.system}
 has an isotropic vector over $k$ and hence, over $F$. This completes the proof of Lemma~\ref{lem.trace12} for $r = 4$.
 \end{proof}

In the proof of Proposition~\ref{prop.X12} below, we will apply Lemma~\ref{lem.trace12}
to the general field extension $L_n/K_n$ defined as follows: $K_n = k(a_1, \ldots, a_n)$, 
where $a_1, \ldots, a_n$ be independent variables and $L_n$ be an extension of $K_n$
obtained by adjoining a root of the "general polynomial" $f(x) = x^n + a_1 x^{n-1} + \ldots + a_n$ of degree $n$. Note that
$f(x)$ is irreducible over $K_n$ by the Eisentstein criterion. Hence, $L_n = K_n[x]/(f(x))$. 

\begin{remark} \label{rem.good-char} 
Lemma~\ref{lem.trace12} may be viewed as a ``bad characteristic variant" of~\cite[Corollary 10.1(c)]{brassil-reichstein}.
Note that under the assumption that $\Char(k)$ divides $n$, Corollary 10.1(c) becomes vacuous. The above proof 
is an enhanced version of the argument from~\cite{brassil-reichstein}, under a somewhat stronger assumption.
Theorem 1.4 (with $p= 2$) and Corollary 10.1(c) in \cite{brassil-reichstein} 
require the system
\[
\begin{cases} 2^{m_1} c_1 + \ldots + 2^{m_{r}} c_{r} = 0 \\ 2^{m_1} c_1^2 + \ldots + 2^{m_{r}} c_{r}^2 = 0
\end{cases} 
\]
to have a non-trivial solution in $k$. 
When $k$ is algebraically closed (and $\Char(k)$ is different from $2$ and does not divide $n$), 
this is equivalent to $r \geqslant 3$.
Here we ask for a solution with $c_r = 0$; see~\eqref{e.system}. This requires $r$ to be $\geqslant 4$.
\end{remark}

\begin{remark} If $F$ is an infinite field, one can always choose $\alpha$ in Lemma~\ref{lem.trace12} so that
it generates $E$ over $F$, i.e., $F[\alpha] = E$. We will not need this assertion here; we refer an interested 
reader to Assertion $(**)$ in~\cite[Theorem 1.4]{brassil-reichstein} and its proof in \cite[Section 8]{brassil-reichstein}. 

Note also that we will only use Lemma~\ref{lem.trace12} in the special case, where $E/F$ is the general field extension $L_n/F_n$ 
defined above. In this case there are no intermediate fields, strictly between $L_n$ and $K_n$, so any $\alpha \in E \setminus F$ 
is automatically a generator.
%
\end{remark} 

\begin{proof}[Proof of Proposition~\ref{prop.X12}] 
Denote the roots of the general polynomial $f(x) = x^n + a_1 x^{n-1} + \ldots + a_0$ by
$x_1, \ldots, x_n$. 
Then $a_i = (-1)^i s_i(x_1, \ldots, x_n)$, where $s_i$ denotes the $i$th symmetric polynomial.
Since $a_1, \ldots, a_n$ are algebraically independent over $k$, so are $x_1, \ldots, x_n$.
Identify $K_n$ with $k(x_1, \ldots, x_n)^{\Sym_n}$ and $L_n$ with $K_n(x_1) = k(x_1, \ldots, x_n)^{\Sym_{n-1}}$, where
$\Sym_n$ naturally permutes $x_1, \ldots, x_n$ and $\Sym_{n-1}$ is the stabilizer of $x_1$ in $\Sym_n$.

It is well known that elements of $L_n$ are in bijective correspondence with 
$\Sym_n$-equivariant rational maps $\phi \colon \mathbb A^n \dasharrow \mathbb A^n$. Indeed,
write 
$\phi(x_1, \ldots, x_n) = \big( \phi_1(x_1, \ldots, x_n), \ldots, \phi_n(x_1, \ldots, x_n) \big)$.
A priori the components $\phi_1, \ldots, \phi_n$ of $\phi$ lie in $k(x_1, \ldots, x_n)$; however,
since $\phi$ is $\Sym_n$-equivariant, $\phi_1$ actually lies in $k(x_1, \ldots, x_n)^{\Sym_n} = L_n$.
Conversely, given $\alpha \in L_n$, we can define $\phi \colon \mathbb A^n \dasharrow \mathbb A^n$
by 
\begin{equation} \label{e.phi}
\phi(x) = \big( \alpha_1(x_1, \ldots, x_n),  \ldots, \alpha_n(x_1, \ldots, x_n) \big) , 
\end{equation}
where $\alpha_1, \ldots, \alpha_n$ are the $\Sym_n$-translates of $\alpha = \alpha_n \in L_n$. Note that there are exactly 
$n$ distinct $\Sym_n$-translates if $\alpha$ does not lie in $K_n$. If $\alpha$ lies in $K_n$, then $\alpha_1 = \ldots = \alpha_n$.

Now choose $\alpha \in L_n$ as in Lemma~\ref{lem.trace12}, and let $\phi \colon \mathbb A^n \dasharrow \mathbb A^n$
be the rational $\Sym_n$-equivariant map given by~\eqref{e.phi}.
Since $s_1(\alpha) = s_2(\alpha) = 0$, the image of $\phi$ is contained in $X_{1, 2} \subset \mathbb A^n$. 
Since $\alpha \not \in K_n$, the general point of $\mathbb A^n$ maps to $X_{1, 2} \setminus D$,
where  $D$ is the small diagonal in $\mathbb A^n$ given by $x_1 = \ldots = x_n$,
as in Lemma~\ref{lem.X12}(c).
In other words, we may think of $\phi$ as a $\Sym_n$-equivariant map $\mathbb A^n \dasharrow X_{1, 2} \setminus D$.
Now recall that since $\mathbb A^n$ is an affine space with a linear action of $\Sym_n$, 
\begin{equation} \label{e.prop.X12-1} \ed_k(\Sym_n) = \ed_k(\mathbb A^n; \Sym_n) \geqslant \ed_k(X_{1,2}; \Sym_n) ; 
\end{equation}
see~Proposition~\ref{prop.prelim1}. On the other hand, by Proposition~\ref{prop.prelim3},
\begin{equation} \label{e.prop.X12-2}
\ed_k(\mathbb A^n; \Sym_n) \leqslant \ed_k(X_{1,2}; \, \Sym_n). 
\end{equation}
Proposition~\ref{prop.prelim3} applies here because $X_{1, 2}$ is an irreducible generically free $\Sym_n$-variety,
$X_{1, 2} \setminus D$ is smooth (see Lemma~\ref{lem.X12}),
and the symmetric group  $\Sym_n$ is weakly tame at any prime. (Once again, here $n \geqslant 15$; see Remark~\ref{rem.main}(d).) 

Combining~\eqref{e.prop.X12-1} and~\eqref{e.prop.X12-2}, we obtain
the desired equality, $\ed_k(\Sym_n) = \ed_k(X_{1,2}; \Sym_n)$.
\end{proof}

\section{Conclusion of the proof of Theorem~\ref{thm.main}}
\label{sect.conclusion}

In this section we will complete the proof of Theorem~\ref{thm.main} by establishing the following.

\begin{proposition} \label{prop.conclusion-main}
Let $k$ be a base field of characteristic $p > 2$.  Assume that $n \geqslant 5$ and $p$ divides $n$.
Then $\ed_k(X_{1,2}; \Sym_n) \leqslant n- 4$.  
\end{proposition}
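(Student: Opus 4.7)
The plan is to construct a commuting 2-dimensional group action on $X_{1,2}$ and take the quotient. Let $G = \mathbb{G}_a \rtimes \mathbb{G}_m$ be the affine group of the line, acting on $\mathbb{A}^n$ by $(c, \lambda) \cdot x = \lambda x + c \mathbf{1}$, where $\mathbf{1} = (1, \ldots, 1)$. Since $\Sym_n$ fixes $\mathbf{1}$ and acts linearly, this action commutes with $\Sym_n$. I will first check that $G$ preserves $X_{1,2}$: a direct expansion gives
\[ s_1(\lambda x + c \mathbf{1}) = \lambda s_1(x) + nc \qquad\text{and}\qquad s_2(\lambda x + c \mathbf{1}) = \lambda^2 s_2(x) + (n-1)\lambda c \, s_1(x) + \binom{n}{2} c^2, \]
which on $X_{1,2}$ reduce to $nc$ and $\binom{n}{2} c^2$, both vanishing because $p \mid n$ and $p > 2$ (so $p \mid \binom{n}{2} = n(n-1)/2$). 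The stabilizer of $x$ in $G$ consists of $(c, \lambda)$ with $(1-\lambda) x = c \mathbf{1}$, which is trivial for $x \notin D$, so $G$ acts generically freely on $X_{1,2}$.

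Since $X_{1,2}$ is irreducible by Lemma~\ref{lem.X12}(a) and $G$ acts generically freely with $2$-dimensional orbits, the rational quotient $\pi \colon X_{1,2} \dasharrow Y$ exists with $Y$ irreducible of dimension $(n-2) - 2 = n - 4$. Because the $G$- and $\Sym_n$-actions commute, $\Sym_n$ descends to an action on $Y$, and $\pi$ is $\Sym_n$-equivariant and dominant. To conclude $\ed_k(X_{1,2}; \Sym_n) \leqslant n - 4$, it remains to show that this $\Sym_n$-action on $Y$ is generically free.

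Since $Y$ is irreducible and $\Sym_n$ is finite, faithfulness suffices. The kernel $K \leqslant \Sym_n$ of $\Sym_n \to \Aut(Y)$ is a normal subgroup, and for $n \geqslant 5$ the only normal subgroups of $\Sym_n$ are $\{1\}$, $A_n$, and $\Sym_n$. Thus I only need to exhibit an odd and an even permutation that act non-trivially on $Y$; I will take $\sigma_1 = (1\,2)$ and $\sigma_2 = (1\,2)(3\,4)$. For each such $\sigma$, an element acts trivially on $Y$ precisely when the constructible set $L_\sigma := \{x \in X_{1,2} : \sigma(x) \in G \cdot x\}$ is dense in $X_{1,2}$. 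I analyze the defining system $\sigma(x) = \lambda x + c \mathbf{1}$ by splitting on $\lambda$: when $\lambda = 1$, the fixed-point equations force $c = 0$ and $x$ into the fixed locus of $\sigma$, which is proper since $\sigma \neq 1$; when $\lambda \neq 1$, the equations $(1-\lambda) x_i = c$ at the fixed points of $\sigma$ force all fixed-point coordinates of $x$ to coincide, while the equations on the non-trivial cycles of $\sigma$ pin the remaining coordinates down to a low-dimensional family. Combining the cases gives $\dim L_{\sigma_j} < n - 2 = \dim X_{1,2}$ for $j = 1, 2$, so $\sigma_1, \sigma_2 \notin K$, forcing $K = \{1\}$.

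The main obstacle is this last step: one has to keep careful track of the dimension of $L_\sigma$ across the sub-cases of $\sigma(x) = \lambda x + c \mathbf{1}$, especially for the exceptional values $\lambda = \pm 1$ (which allow larger families than generic $\lambda$ because $\lambda^2 = 1$ matches the length of the $2$-cycles in $\sigma_1$ and $\sigma_2$). Once $K = \{1\}$ is established, $\Sym_n$ acts faithfully, and hence generically freely, on the irreducible variety $Y$, so $\pi \colon X_{1,2} \dasharrow Y$ is a $\Sym_n$-compression and $\ed_k(X_{1,2}; \Sym_n) \leqslant \dim Y = n - 4$, as desired.
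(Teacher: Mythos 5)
Your high-level strategy is exactly the paper's: exploit the $2$-dimensional group $B = \mathbb{G}_a \rtimes \mathbb{G}_m$ acting by $x \mapsto \lambda x + c\mathbf{1}$, which preserves $X_{1,2}$ precisely because $p \mid n$ (and $p > 2$, so $p \mid \binom{n}{2}$), commutes with $\Sym_n$, and acts generically freely. The divergence is in how the quotient and its faithfulness are handled. You take the abstract rational quotient $X_{1,2} \dasharrow Y$ and must then verify that $\Sym_n$ acts faithfully on $Y$; your proposed method is to bound $\dim L_\sigma$, where $L_\sigma = \{x : \sigma(x) \in B\cdot x\}$, by a case analysis on $\lambda$, and you candidly flag this as the ``main obstacle'' without carrying it out. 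This is a genuine gap as written: the case $\lambda = -1$ is where the action of a $2$-cycle can be absorbed into $B$, and one has to check that the resulting locus (e.g.\ for $\sigma = (1\,2)$, the set where $x_2 = -x_1 + c$ and $x_3 = \cdots = x_n = c/2$) has dimension strictly below $n-2$ after imposing $s_1 = s_2 = 0$. The count does go through for $n \geqslant 5$, so your argument is repairable, but it is the step that needs to be actually done.

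The paper avoids this entirely by constructing the compression explicitly: it sends $x$ to the tuple of all ratios $\dfrac{x_r - x_s}{x_r - x_t}$, which are manifestly $B$-invariant and are permuted by $\Sym_n$. Faithfulness then follows from a one-line computation (a single $3$-cycle already changes one of these ratios unless $x \in \Delta$), and the dimension bound $\dim Y \leqslant n-4$ follows from the Fiber Dimension Theorem because the map collapses $2$-dimensional $B$-orbits. The explicit map thus converts your hardest step into a trivial check; if you want to keep the abstract quotient, you should either finish the $L_\sigma$ dimension count for $\lambda \in \{1, -1, \text{other}\}$, or observe that the ratios $\frac{x_r - x_s}{x_r - x_t}$ are $B$-invariant rational functions that separate $\Sym_n$-translates, which shows faithfulness on $Y$ directly.
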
 

Note that the assumption of Theorem~\ref{thm.main} that $n$ should not be a sum of three or fewer powers of $2$, is not needed here.

Before proceeding with the proof of Proposition~\ref{prop.conclusion-main}, we briefly outline our overall strategy. 
Our goal is to show the existence of a $\Sym_n$-compression $\pi \colon X_{1, 2} \dasharrow Y$ defined over $k$, 
where $\dim(Y) \leqslant n- 4$. Key to our construction is the observation that 
$X_{1, 2}$ admits an action of a $2$-dimensional linear algebraic group $B$, 
which commutes with the $\Sym_n$-action. This $B$-action is a characteristic $p$ phenomenon; 
it only exists when $\Char(k)$ divides $n$. The idea is then to define $\pi$ as the quotient map for this action.
The remainder of this section will be devoted to working out the details of this construction.

We begin by introducing the $2$-dimensional algebraic group $B$. It is the group of upper-triangular matrices in $\PGL_2$, 
i.e., the group of matrices of the form $\begin{pmatrix} \alpha & \beta \\ 0 & 1 \end{pmatrix}$. 
(Here $B$ stands for ``Borel subgroup".) Consider the natural action of $B$ on $\mathbb A^n$ by 
\[ \begin{pmatrix} \alpha & \beta \\ 0 & 1 \end{pmatrix} \cdot (x_1, \ldots, x_n) \to 
(\alpha x_1 + \beta, \ldots, \alpha x_n + \beta) . \]

\begin{lemma} \label{lem.conclusion1}
(a) $X_{1, 2} \subset \mathbb A^n$ is invariant under the action of $B$ defined above.

(b) The stabilizer in $B$ of a point $a = (a_1, \ldots, a_n) \in X_{1, 2} \setminus \Delta$ is trivial. 
\end{lemma}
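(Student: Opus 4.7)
The plan for (a) is a direct computation using the quadric description~\eqref{e.X12} of $X_{1, 2}$, which is available since $\Char(k) = p > 2$. Given $(x_1, \ldots, x_n) \in X_{1, 2}$ and $g = \begin{pmatrix} \alpha & \beta \\ 0 & 1 \end{pmatrix} \in B$, I would expand
\[
\sum_{i=1}^n (\alpha x_i + \beta) = \alpha \sum_i x_i + n \beta, \qquad \sum_{i=1}^n (\alpha x_i + \beta)^2 = \alpha^2 \sum_i x_i^2 + 2 \alpha \beta \sum_i x_i + n \beta^2.
\]
Using $\sum_i x_i = \sum_i x_i^2 = 0$, both right-hand sides reduce to multiples of $n$ in $k$. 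Since $p \mid n$ and $\Char(k) = p$, these vanish, confirming that $g \cdot (x_1, \ldots, x_n) \in X_{1, 2}$. The divisibility assumption $p \mid n$ is essential here, in line with the author's remark that the $B$-action is a characteristic-$p$ phenomenon.

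For (b), the plan is to solve the stabilizer equation coordinate by coordinate. If $g = \begin{pmatrix} \alpha & \beta \\ 0 & 1 \end{pmatrix}$ fixes $a = (a_1, \ldots, a_n)$, then $\alpha a_i + \beta = a_i$, i.e.\ $(\alpha - 1) a_i = -\beta$ for every $i$. I would then split on whether $\alpha = 1$: if $\alpha \neq 1$, then all $a_i$ equal the common value $-\beta/(\alpha - 1)$, which places $a$ on the small diagonal $D \subset \Delta$ and contradicts $a \notin \Delta$; hence $\alpha = 1$, and then $\beta = 0$ is forced, so $g$ is the identity of $B$.

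Neither part poses a serious obstacle — both follow from elementary manipulations. The only point requiring care is tracking which hypotheses are used: specifically, $\Char(k) \neq 2$ is what makes the quadric description~\eqref{e.X12} available (via the Newton identity $p_2 = s_1^2 - 2 s_2$ relating the power sum to the elementary symmetric polynomial), and $p \mid n$ is precisely what kills the terms involving $n\beta$ and $n\beta^2$ in part (a). The hypothesis $n \geqslant 5$ inherited from the enveloping Proposition~\ref{prop.conclusion-main} is not needed for the lemma itself.
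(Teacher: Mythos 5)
Your proposal is correct and follows essentially the same route as the paper. Part (a) is the same computation (the paper writes it using trace notation, you write it as explicit sums, but it is the identical expansion using $p \mid n$); for part (b), the paper packages the $n$ equations $(\alpha - 1)a_i + \beta = 0$ into a $2 \times n$ matrix of rank $2$ and concludes the (scheme-theoretic) stabilizer is trivial, while your case split on $\alpha = 1$ vs.\ $\alpha \neq 1$ is an equivalent way of seeing the same thing — indeed, picking $i \neq j$ with $a_i \neq a_j$ and subtracting gives $(\alpha - 1)(a_i - a_j) = 0$, which forces $\alpha = 1$ and then $\beta = 0$ even scheme-theoretically, so there is no real gap in your version.
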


\begin{proof} (a) We need to check that 
$\Tr(a) = \Tr(a^2) = 0$ implies $\Tr(g \cdot a) = \Tr((g \cdot a)^2) = 0$,
for any $a = (a_1, \ldots, a_n) \in \mathbb A^n(\overline{k})$
and any $g = \begin{pmatrix} \alpha &  \beta \\
0 & 1
\end{pmatrix} \in B$. Here 
$\Tr$ denotes the trace in
the split $\overline{k}$-\'etale algebra $\overline{k} \times \ldots \times \overline{k}$ ($n$ times), i.e.,
$\Tr(a) = a_1 + \ldots + a_n$ and
$\Tr(a^2) = a_1^2 + \ldots + a_n^2$.

Indeed, assume that $\Tr(a) = \Tr(a^2) = 0$. Since $\Char(k)$ divides $n$, we have
\[ \Tr(g \cdot a) = \Tr (\alpha a_1 + \beta, \ldots, \alpha a_n + \beta) = \alpha \Tr(a) + n \beta = 0 + 0 = 0 \] 
and
$ \Tr((g \cdot a)^2)  = \Tr((\alpha a_1 + \beta)^2, \ldots, (\alpha a_n + \beta)^2) 
   = \alpha^2 \Tr(a^2) + 2 \alpha \beta \Tr(a) + n \beta^2  = 0 + 0 + 0 = 0$, 
as desired.

\smallskip
(b) The stabilizer of any point $a = (a_1, \ldots, a_n) \in \mathbb A^n(\overline{k})$ is the group subscheme of $B$ cut out by the equations
\begin{equation} \label{e.conclusion1}
\begin{cases} 
\alpha a_1 + \beta = a_1, \\
\ldots \\
\alpha a_n + \beta = a_n.
\end{cases}
\end{equation}
Here $a_1, \ldots, a_n \in \overline{k}$ are fixed, and $\alpha$ and $\beta$ are coordinate functions on $B$.
Rewriting this system in matrix form, we obtain 
\[ (\alpha - 1, \, \beta) \cdot \begin{pmatrix} a_1  & a_2 & \ldots & a_n  \\
                                             1 & 1 & \ldots & 1 \end{pmatrix} = \begin{pmatrix} 0 &  \ldots & 0 \end{pmatrix}. \]
If $a \not \in D\subset \Delta$, i.e., at least two of the elements
$a_1, \ldots, a_n$ of $k$ are distinct, then the $2 \times n$ matrix 
$\begin{pmatrix} a_1  & a_2 & \ldots & a_n  \\
1 & 1 & \ldots & 1 \end{pmatrix}$ 
has rank $2$. Hence, the kernel of this matrix is trivial. We conclude that (scheme-theoretic) solution set to the system~\eqref{e.conclusion1} consists of a single point, $(\alpha, \beta) = (1, 0)$. In other words,
the stabilizer of $a$ in $B$ is trivial.                                        
\end{proof}

We now define the $\Sym_n$-equivariant morphism $\pi \colon (X_{1, 2} \setminus \Delta) \to \mathbb A^{n(n-1)(n-2)}$ by
\[ \pi \colon a = (x_1, \ldots, x_n) \mapsto \Big( \,  \dfrac{x_{r} - x_{s}}{x_r - x_t} \, \Big)_{(r, s, t)} \]
where the subscript $(r, s, t)$ ranges over the $n(n-1)(n-2)$ ordered triples of distinct integers in $\{ 1, 2, \ldots, n \}$, 
and $\Sym_n$ acts on these triples in the natural way. Clearly, each $\dfrac{x_{r} - x_{s}}{x_r - x_t}$
is a regular function on $X_{1, 2} \setminus \Delta$. Letting $Y$ be the Zariski closure of the image of $\pi$ in $\mathbb A^{n(n-1)(n-2)}$, we may view $\pi$ as an $\Sym_n$-equivariant dominant 
rational $X_{1, 2} \dasharrow Y$.
The following lemma completes the proof of Proposition~\ref{prop.conclusion-main}.

\begin{lemma} \label{lem.conclusion2} Assume that $n \geqslant 5$. Then
(a) $\Sym_n$ acts faithfully on $Y$, and
(b) $\dim(Y) \leqslant n - 4$.
\end{lemma}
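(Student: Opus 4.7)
The plan is to exploit the observation that each coordinate function $(x_r - x_s)/(x_r - x_t)$ of $\pi$ is invariant under the $B$-action from Lemma~\ref{lem.conclusion1}: replacing each $x_i$ by $\alpha x_i + \beta$ multiplies every difference $x_r - x_s$ by $\alpha$, leaving the ratio unchanged. This handles part (b) immediately: because $\pi$ is $B$-invariant and $B$ acts freely on $X_{1,2} \setminus \Delta$ by Lemma~\ref{lem.conclusion1}(b), $\pi$ factors through the geometric quotient $(X_{1,2} \setminus \Delta)/B$, which has dimension $\dim X_{1,2} - \dim B = (n-2) - 2 = n-4$ (using $\dim X_{1,2} = n-2$ from Lemma~\ref{lem.X12}). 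Hence $\dim Y \leqslant n - 4$.

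For part (a), I would suppose that some $\sigma \in \Sym_n$ acts trivially on $Y$ and argue that $\sigma = 1$. Since $\pi$ is $\Sym_n$-equivariant, $\pi(\sigma \cdot x) = \pi(x)$ for generic $x \in X_{1,2}$, i.e., every ratio $(x_r - x_s)/(x_r - x_t)$ agrees with the corresponding ratio at $\sigma \cdot x$. Fixing two indices $p, q$ and solving linearly for $\alpha$ and $\beta$ from the triples $(p, i, q)$ explicitly reconstructs an element $g = (\alpha(x), \beta(x)) \in B(k(X_{1,2}))$ satisfying $\sigma \cdot x = g \cdot x$; equivalently, $x_{\sigma^{-1}(i)} = \alpha x_i + \beta$ for every $i$. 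Multiplying the $i$-th relation by $x_i$ and summing gives $\sum_i x_i x_{\sigma^{-1}(i)} = \alpha \sum_i x_i^2 + \beta \sum_i x_i$, which vanishes on $X_{1,2}$. Consequently the quadratic polynomial $f_\sigma := \sum_i x_i x_{\sigma(i)}$ lies in the ideal $(s_1, s_2) \subseteq k[x_1, \ldots, x_n]$.

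The main obstacle is to show that $f_\sigma \not\in (s_1, s_2)$ whenever $\sigma \neq 1$; this is where the hypotheses $n \geqslant 5$ and $\Char(k) \neq 2$ enter. I would identify $f_\sigma$ with the symmetric matrix $M_\sigma$ whose $(i,i)$-entry is $[\sigma(i) = i]$ and whose $(i,j)$-entry for $i \neq j$ is $\tfrac{1}{2}\bigl([\sigma(i) = j] + [\sigma(j) = i]\bigr)$, and compare it against the degree-$2$ part of $(s_1, s_2)$, which is spanned by $s_2$ together with $\{x_k s_1 : 1 \leqslant k \leqslant n\}$. Writing $M_\sigma = c_0 (J-I)/2 + \sum_k c_k (e_k e^{\mathsf T} + e e_k^{\mathsf T})/2$, the diagonal entries force $c_k = [\sigma(k) = k]$, and a short case analysis on the number of fixed points of $\sigma$ closes the argument: if $\sigma$ has at least two fixed points, one obtains $c_0 = -2$ and $c_0 = -1$ simultaneously; with exactly one fixed point, the pair equations force $-1$ to be congruent to one of $0, 1, 2$ modulo $p$, which is impossible for $p \neq 2, 3$ and is ruled out combinatorially for $p = 3$ once $n \geqslant 4$; with no fixed points, $c_0$ must lie in $\{0, 1, 2\}$ in $\mathbb F_p$, and an arrow count in the symmetric functional graph of $\sigma$ then forces $n \leqslant 3$. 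Since $n \geqslant 5$, every case is excluded, so $\sigma = 1$ and the $\Sym_n$-action on $Y$ is faithful.
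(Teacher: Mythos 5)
Your proof of part~(b) is correct and is essentially the paper's argument in a different costume: the paper notes that $\pi$ collapses each $2$-dimensional $B$-orbit in $X_{1,2}\setminus\Delta$ to a point and invokes the Fiber Dimension Theorem, whereas you pass through the (rational) quotient $(X_{1,2}\setminus\Delta)/B$ of dimension $(n-2)-2$. Both hinge on Lemma~\ref{lem.conclusion1}(b), and either phrasing is fine.

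Part~(a) is where you take a genuinely different route. The paper's proof is a two-line argument that uses the group structure of $\Sym_n$: for $n\geqslant 5$, a non-trivial normal subgroup of $\Sym_n$ must contain $\Alt_n$, and then the single equation $\frac{a_1-a_2}{a_1-a_3}=\frac{a_1-a_4}{a_1-a_3}$ for the $3$-cycle $(2\,4\,5)$ forces $a_2=a_4$, a contradiction. You instead argue directly, with no appeal to simplicity of $\Alt_n$: from $\pi(\sigma x)=\pi(x)$ you recover $g\in B(k(X_{1,2}))$ with $\sigma\cdot x=g\cdot x$, deduce that $f_\sigma=\sum_i x_ix_{\sigma(i)}$ vanishes on $X_{1,2}$, and then run a case analysis on the quadratic form to force $\sigma=\mathrm{id}$. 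This is sound, and I checked that the matrix computation and the fixed-point case analysis do close for $p>2$ and $n\geqslant 5$, but it is considerably heavier than the paper's argument and relies on two facts that you state without full justification: (i) that the fibers of $\pi$ on $X_{1,2}\setminus\Delta$ are exactly the $B$-orbits (the ``reconstruction'' of $g$ from equal cross-ratios), which is true but is precisely the content the paper relegates to Remark~\ref{rem.conclusion1} and deliberately avoids using; and (ii) that a homogeneous degree-$2$ polynomial vanishing on $X_{1,2}$ lies in the span of $s_2$ and $x_ks_1$, which requires knowing that $I(X_{1,2})=(s_1,s_2)$ (true because $X_{1,2}$ is an irreducible quadric hypersurface in the hyperplane $s_1=0$, but worth a sentence). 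What your approach buys is independence from the normal-subgroup classification in $\Sym_n$, so in principle it could transfer to other permutation groups; what it costs is length, reliance on the quotient description of $\pi$, and the delicate mod-$p$ bookkeeping in the case analysis, all of which the paper's normal-subgroup argument sidesteps.
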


\begin{proof} (a) Assume the contrary. The kernel $N$ of this action is a non-trivial normal subgroup of $\Sym_n$.
Since $n \geqslant 5$, $N$ is either the alternating group $\Alt_n$ or the full symmetric group $\Sym_n$. In both cases
$\Alt_n$ acts trivially on $\pi(a)$ for every $a = (a_1, \ldots, a_n) \in X_{1, 2} \setminus \Delta$. In particular,
the 3-cycle $\sigma = (2, \, 4, \, 5) \in \Alt_n$ preserves $\pi(a)$. 
That is,
\begin{equation} \label{e.pi}
\dfrac{a_{1} - a_{2}}{a_1 - a_3} = \sigma \cdot  \dfrac{a_{1} - a_{2}}{a_1 - a_3} = \dfrac{a_{1} - a_{4}}{a_1 - a_3} . 
\end{equation}
This implies $a_2 = a_4$, which contradicts our assumption that $a \not \in \Delta$.

\smallskip
(b) Note that $\pi$ sends every $B$-orbit to a point. By Lemma~\ref{lem.conclusion1}(b), a general orbit of $B$ in $X_{1, 2}$ is 2-dimensional. Hence, a general fiber of $\pi$ is of dimension $\geqslant 2$. By the Fiber Dimension Theorem, $\dim(Y) \leqslant \dim(X_{1, 2}) - 2 = n- 4$.
\end{proof}

\begin{remark} \label{rem.conclusion1}
As we suggested at the beginning of this section, $\pi$ is, in fact, a rational quotient for the $B$-action on $X_{1, 2}$. We do not need to know this though; the explicit formula in~\eqref{e.pi} suffices for the purpose of proving Proposition~\ref{prop.conclusion-main}.
\end{remark}

\bibliographystyle{plain}

\end{document}